\documentclass[12pt]{article}
\usepackage[english]{babel}
\usepackage[all]{xy}
\usepackage{fancyhdr}
\usepackage{setspace, amsmath, amsthm, amssymb, amsfonts, amscd, epic, graphicx, ulem, dsfont}
\usepackage{multirow}
\usepackage{amsthm}
\usepackage[T1]{fontenc}
\usepackage{answers}
\usepackage{hyperref}
\usepackage{setspace}
\newtheorem{theorem}{Theorem}

\newtheorem{proposition}[theorem]{Proposition}
\newtheorem{corollary}[theorem]{Corollary}
\newtheorem{example}[theorem]{Example}

\newtheorem{remark}[theorem]{Remark}

\makeatletter

\@addtoreset{equation}{section}
\makeatother

\title{Construction of normally biharmonic submanifolds}

\author{Ahmed Mohammed Cherif\footnote{University Mustapha Stambouli Mascara, Faculty of Exact Sciences, Mascara 29000, Algeria. Email:
a.mohammedcherif@univ-mascara.dz
}}
\date{}
\begin{document}
\maketitle

\begin{abstract}
We examine biharmonic submanifolds within warped product structures. For a submanifold $(M,g)\subset (N,h)$ and a positive smooth function $f:I\to\mathbb{R}^+$, we study the inclusion $\varphi:(I\times M,\widetilde{g})\to (I\times N,\overline{h})$, where $\widetilde{g}=dt^2+f^2g$ and $\overline{h}=dt^2+f^2h$. We relate the tension and bitension fields of $\varphi$ to the warping function and the geometry of $M$. We further characterize tangentially and normally biharmonic cases via differential conditions on $f$, and interpret these conditions in terms of the Ricci curvature of $(M,g)$ and $(I\times M,\widetilde{g})$.
\end{abstract}

\begin{flushleft}
Keywords:  Biharmonic submanifolds;  Biconservative submanifolds.\\
Subjclass: 53C20; 58E20.\\
\end{flushleft}

\maketitle

\section{Introduction}

The energy functional of a smooth map $\varphi:(M,g)\longrightarrow(N,h)$ between Riemannian manifolds is defined by
\begin{equation}\label{eq1.1}
    E(\varphi)=\frac{1}{2}\int_D |d\varphi|^2 \, v^g,
\end{equation}
where $D$ is a compact domain of $M$, $|d\varphi|$ denotes the Hilbert--Schmidt norm of the differential $d\varphi$, and $v^g$ is the volume element on $(M,g)$. A map $\varphi$ is called harmonic if it is a critical point of the energy functional (\ref{eq1.1}). The associated Euler--Lagrange equation is given by (see \cite{BW,ES,YX})
\begin{equation}\label{eq1.2}
\tau(\varphi)\equiv\operatorname{trace}\nabla d\varphi
=\sum_{i=1}^m\left[ \nabla^\varphi_{e_i} d\varphi(e_i)
- d\varphi(\nabla^M_{e_i}e_i)\right]=0,
\end{equation}
where $\{e_i\}_{i=1}^m$ is a local orthonormal frame on $(M,g)$, $\nabla^{M}$ is the Levi--Civita connection of $(M,g)$, $\nabla^{\varphi}$ is the pull-back connection on $\varphi^{-1}TN$.\\
A natural generalization of harmonic maps is obtained by considering the square norm of the tension field. More precisely, the bienergy functional of $\varphi\in C^\infty(M,N)$ is defined by
\begin{equation}\label{eq1.3}
    E_2(\varphi)=\frac{1}{2}\int_D |\tau(\varphi)|^2 \, v^g.
\end{equation}
A map $\varphi$ is called biharmonic if it is a critical point of the bienergy functional, that is, if it satisfies the Euler--Lagrange equation associated to (\ref{eq1.3}), given by
\begin{eqnarray}\label{eq1.4}
\tau_2(\varphi)
   &\equiv& -\operatorname{trace} R^N(\tau(\varphi),d\varphi)d\varphi
           -\operatorname{trace}\, (\nabla^{\varphi})^2 \tau(\varphi) \nonumber \\
   &=& -\sum_{i=1}^m R^N(\tau(\varphi),d\varphi(e_i))d\varphi(e_i)
       -\sum_{i=1}^m \nabla^{\varphi}_{e_i}\nabla^{\varphi}_{e_i}\tau(\varphi) \nonumber \\
   &&+\sum_{i=1}^m \nabla^{\varphi}_{\nabla^M_{e_i}e_i}\tau(\varphi)=0,
\end{eqnarray}
where $R^N$ is the curvature tensor of $(N,h)$ defined by
\[
R^N(X,Y)Z=\nabla^N_X \nabla^N_Y Z-\nabla^N_Y \nabla^N_X Z-\nabla^N_{[X,Y]}Z,
\]
for $X,Y,Z\in\Gamma(TN)$ (see \cite{Jiang,YX}). It is clear from (\ref{eq1.4}) that every harmonic map is biharmonic, non-harmonic biharmonic maps are called proper biharmonic maps.\\
Let $M$ be a submanifold of a Riemannian manifold $(N,\langle\cdot,\cdot\rangle)$ of dimension $m$, and let $\mathbf{i}:M\hookrightarrow N$ denote the canonical inclusion. Let $\{e_i\}_{i=1}^m$ be a local orthonormal frame with respect to the induced metric $g$. We denote by $\overline{\nabla}$ (resp. $\nabla$) the Levi--Civita connections of $(N,\langle\cdot,\cdot\rangle)$ (resp. $(M,g)$), by $B$ the second fundamental form, and by $H=(1/m)\operatorname{trace}\,B$ the mean curvature vector field (see \cite{BW,ON}). The submanifold $(M,g)$ is called harmonic (resp. biharmonic) if $\tau(\mathbf{i})=0$ (resp. $\tau_2(\mathbf{i})=0$) (see \cite{chen}). Moreover, the tension and bitension fields of the inclusion satisfy
\begin{equation}\label{eq1.5}
    \tau(\mathbf{i})=mH,\quad
    \tau_2(\mathbf{i})=-m\sum_{i=1}^m\left\{
    \overline{R}(H,e_i)e_i+
    \overline{\nabla}_{e_i}\overline{\nabla}_{e_i}H
    -\overline{\nabla}_{\nabla_{e_i}e_i}H\right\},
\end{equation}
where $\overline{R}$ is the curvature tensor of $(N,h)$. The bitension field admits a natural decomposition into tangential and normal components (see \cite{chen}). In \cite{OU}, Ye-Lin Ou proved that a hypersurface $(M,g)$ in a Riemannian manifold $(N,\langle ,\rangle)$ with mean curvature vector field $H= \lambda \eta $, that is the dimension of $N$ is $m+1$, is biharmonic if and only if
\begin{equation}\label{S}
\left\{
\begin{array}{lll}
\tau_2(\mathbf{i})^\bot\equiv m\left[-\Delta(\lambda)  + \lambda |A|^2 -\lambda \operatorname{\overline{Ric}}(\eta , \eta )\right]\eta  &=& 0; \\\\
\tau_2(\mathbf{i})^\top\equiv m\left[2A(\operatorname{grad} \lambda)+ m\lambda \operatorname{grad}\lambda -2 \lambda (\operatorname{\overline{Ricci}} \eta)^\top\right]  &=& 0,
\end{array}
\right.
\end{equation}
where $\operatorname{\overline{Ric}} $ (resp. $\operatorname{\overline{Ricci}}$) is the Ricci curvature (resp. Ricci tensor) of $(N,\langle ,\rangle )$, $\lambda$ denote the mean curvature function of $(M,g)$, and $A$ the shape operator with respect to the unit normal vector field $\eta$.\\
A submanifold is said to be tangentially (resp. normally) biharmonic if its tangential (resp. normal) component of the bitension field vanishes. For the tangentially biharmonic submanifold see for example \cite{S}.\\
Warped product manifolds offer a versatile framework in differential geometry and mathematical physics for constructing spaces with prescribed curvature (see \cite{ON}). Given a positive smooth function $f:I\subset\mathbb{R}\to\mathbb{R}^+$, consider the warped products $(I\times M,\widetilde{g})$ and $(I\times N,\overline{h})$, with $\widetilde{g}=dt^2+f^2 g$ and $\overline{h}=dt^2+f^2 h$. We study the natural inclusion $\varphi:(I\times M,\widetilde{g}) \to (I\times N,\overline{h}),$ $(t,x)\mapsto (t,x),$
as a model for analyzing the interaction between biharmonicity and warped product geometry.
Our goal is to relate the tension and bitension fields of $\varphi$ to the warping function $f$ and the geometry of the submanifold $(M,g)\subset (N,h)$. We derive explicit expressions for $\tau(\varphi)$ and $\tau_2(\varphi)$ in terms of $f$, its derivatives, and the mean curvature vector $H$, leading to rigidity results and differential characterizations of the inclusion map.\\
In particular, we obtain a formula for $\overline{h}(\tau_2(\varphi),\tau(\varphi))$ involving derivatives of $f$ and $|H|^2$, establishing a direct connection between biharmonicity and the warped structure. As applications, we show that biharmonicity reduces to harmonicity under suitable conditions and characterize normally biharmonic behavior via simple constraints on $f$.
Finally, we express these conditions in terms of curvature, providing an equivalent formulation involving the Ricci tensors of $(M,g)$ and $(I\times M,\widetilde{g})$. This yields a geometric interpretation in terms of curvature compatibility between the base manifold and the warped product.

\section{Main Results}

According to (\ref{S}), we obtain the following.

\begin{theorem}\label{th01}
A hypersurface $(M,g)$ immersed in a space form $(N(c),h)$ is normally biharmonic if and only if its mean curvature function $\lambda$ satisfies
\[
\Delta \lambda = \left(|A|^2 - mc\right)\lambda,
\]
where $m=\dim M$.
\end{theorem}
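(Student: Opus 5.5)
The plan is to specialize Ou's decomposition (\ref{S}) to the case where the ambient manifold has constant sectional curvature $c$. By definition, $(M,g)$ is normally biharmonic exactly when the normal part $\tau_2(\mathbf{i})^\bot$ vanishes. By the first line of (\ref{S}), this normal part is $m\left[-\Delta\lambda+\lambda|A|^2-\lambda\,\overline{\operatorname{Ric}}(\eta,\eta)\right]\eta$. Since $\eta$ is a unit vector field and $m\neq 0$, the condition $\tau_2(\mathbf{i})^\bot=0$ is equivalent to the scalar equation
\[
\Delta\lambda=\lambda|A|^2-\lambda\,\overline{\operatorname{Ric}}(\eta,\eta).
\]
The remaining task is to evaluate $\overline{\operatorname{Ric}}(\eta,\eta)$ in $N(c)$.

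In a space form of curvature $c$, the curvature tensor (with the sign convention fixed in the introduction) is
\[
\overline{R}(X,Y)Z=c\left(h(Y,Z)X-h(X,Z)Y\right).
\]
Hence $\overline{\operatorname{Ric}}(X,Y)=c\,(\dim N-1)\,h(X,Y)=mc\,h(X,Y)$, because $\dim N=m+1$. It follows that $\overline{\operatorname{Ric}}(\eta,\eta)=mc$. To check this directly, take a local orthonormal frame $\{e_1,\dots,e_m,\eta\}$ along $M$ and sum $h(\overline{R}(\eta,e_i)e_i,\eta)=c$ over $i=1,\dots,m$; this gives $mc$.

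Substituting into the scalar equation yields $\Delta\lambda=(|A|^2-mc)\lambda$, which is the claimed condition. Every step is reversible, so the equivalence holds.

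The only point needing care is sign conventions, and I do not expect it to be a real obstacle. Two things must be consistent: the sign of $\Delta$ in (\ref{S}), and the fact that $\overline{\operatorname{Ric}}(\eta,\eta)=\sum_i h(\overline{R}(\eta,e_i)e_i,\eta)$ equals $+mc$ under the stated convention for $R^N$. For the latter I would confirm against the $-m\,\overline{R}(H,e_i)e_i$ term in (\ref{eq1.5}). With $H=\lambda\eta$, the normal component of $-m\sum_i\overline{R}(H,e_i)e_i$ is $-m\lambda\,\overline{\operatorname{Ric}}(\eta,\eta)\eta=-m^2c\lambda\,\eta$, which matches the $-\lambda\,\overline{\operatorname{Ric}}(\eta,\eta)$ term in (\ref{S}). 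I take the sign of $\Delta$ to be exactly as it appears in (\ref{S}), since Theorem \ref{th01} is stated relative to that formula.
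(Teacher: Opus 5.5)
Your proposal is correct and is the same argument the paper uses: it reads Theorem~\ref{th01} directly off the normal component in (\ref{S}) by substituting $\overline{\operatorname{Ric}}(\eta,\eta)=mc$ for the $(m+1)$-dimensional space form $N(c)$. Your sign cross-check against the $-m\,\overline{R}(H,e_i)e_i$ term in (\ref{eq1.5}) is consistent. Taking $\Delta$ with the sign in (\ref{S}), namely $\Delta=\operatorname{div}\operatorname{grad}$, matches the convention the paper uses in its examples.
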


\begin{remark}
In view of \eqref{S}, any constant mean curvature (CMC) hypersurface immersed in a space form is normally biharmonic if and only if it is biharmonic.
\end{remark}

\begin{example}
Consider a surface $M$ in $\mathbb{S}^3$ parametrized by
\[
X(u,v) = (u,v,r),
\]
where $r\neq 0$ is a constant. The induced metric on $M$ is
\[
g = \frac{4}{\big(1+r^2+u^2+v^2\big)^2}\,(du^2 + dv^2).
\]
A direct computation shows that the mean curvature function is constant and given by $\lambda = r$. The associated unit normal vector field along $M$ in $\mathbb{S}^3$ is
\[
\eta = \frac{1}{2}\big(1+r^2+u^2+v^2\big)\,\frac{\partial}{\partial z}.
\]
Furthermore, one finds that $\Delta \lambda = 0$ and $|A|^2 = 2r^2$.
Substituting these into the equation of Theorem~\ref{th01} with $c=1$, we obtain the condition
$r^2 - 1 = 0$. Therefore, for $r=\pm 1$, the surface $M$ is normally biharmonic in $\mathbb{S}^3$.
\end{example}

We next present an example of a hypersurface whose mean curvature function is not constant.

\begin{example}
Let $M$ be a surface of revolution in $\mathbb{R}^3$ parametrized by
\[
X(u,v) = \big(r u \cos v,\; r u \sin v,\; u\big),
\]
where $r>0$ is a constant, $u>0$, and $v \in (0,2\pi)$. The induced metric on $M$ is given by
\[
g = (1+r^2)\,du^2 + r^2 u^2\,dv^2.
\]
A direct computation shows that the mean curvature function $\lambda=\lambda(u)$ takes the form
\[
\lambda(u) = \frac{1}{2r\sqrt{1+r^2}}\,\frac{1}{u}.
\]
The corresponding unit normal vector field along $M$ in $\mathbb{R}^3$ is
\[
\eta = -\frac{\cos v}{\sqrt{1+r^2}}\,\frac{\partial}{\partial x}
       -\frac{\sin v}{\sqrt{1+r^2}}\,\frac{\partial}{\partial y}
       +\frac{r}{\sqrt{1+r^2}}\,\frac{\partial}{\partial z}.
\]
Moreover, one computes
\begin{align*}
\Delta \lambda &= \frac{1}{2r(1+r^2)^{3/2}}\,\frac{1}{u^3}, \\
|A|^2 &= \frac{1}{r^2(1+r^2)}\,\frac{1}{u^2}.
\end{align*}
Substituting these expressions into the equation of Theorem~\ref{th01} with $c=0$, we obtain the condition
$r^2 - 1 = 0$. Hence, for $r=1$, the surface $M$ is normally biharmonic in $\mathbb{R}^3$, and its mean curvature function is non-constant.
\end{example}

\textbf{Problem.} Classify all normally biharmonic surfaces in a $3$-dimensional space form \(N(c)\), with particular emphasis on the cases of \(\mathbb{R}^3\), \(\mathbb{S}^3\), and \(\mathbb{H}^3\).\\


Let $(M,g)$ be a submanifold in a Riemannian manifold \((N, h)\).
Given any smooth positive function $f$ defined on an open interval $I \subset \mathbb{R}$, we consider the inclusion map
\begin{align*}
    \varphi: (I \times M, \widetilde{g}) &\rightarrow (I \times N, \overline{h}), \\
    (t,x) &\mapsto (t,x)
\end{align*}
where the Riemannian metrics $\widetilde{g}$ and $\overline{h}$ are defined by $\widetilde{g} = dt^2 + f^2 g$ and $\overline{h} = dt^2 + f^2 h$, respectively.
In the context of biharmonic submanifolds, it is important to understand how their geometric properties interact with warped product structures. By considering a biharmonic submanifold $(M,g)$ in a Riemannian manifold $(N,h)$ and analyzing the inclusion map into an associated warped product space, we can derive explicit formulas relating the bitension and tension fields. The following theorem establishes such a relation, expressing it in terms of the warping function $f$ and the mean curvature vector $H$.

\begin{theorem}\label{th1}
 Let $(M,g)$ be a biharmonic submanifold in $(N,h)$. Then
\begin{eqnarray*}
 \overline{h}(\tau_2(\varphi),\tau(\varphi))
   &=& \frac{2m^2[ff''+(m-1)(f')^2]}{f^4}|H|^2.
\end{eqnarray*}
\end{theorem}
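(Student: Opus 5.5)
The plan is a direct computation in the two warped products, using O'Neill's formulas for the Levi-Civita connection and curvature of $dt^2+f^2h$. The same formulas serve for $dt^2+f^2g$. On $I\times M$ I use the local orthonormal frame $\{\partial_t, \widetilde e_i=e_i/f\}$, where $\{e_i\}$ is $g$-orthonormal. For vector fields $X,Y,Z$ lifted from $N$ (and similarly from $M$), I will use
\[
\overline\nabla_{\partial_t}\partial_t=0,\quad \overline\nabla_X\partial_t=\overline\nabla_{\partial_t}X=\tfrac{f'}{f}X,\quad \overline\nabla_XY=\nabla^N_XY-ff'h(X,Y)\partial_t .
\]
The curvature identities I need are
\[
\overline R(V,\partial_t)\partial_t=-\tfrac{f''}{f}V,
\]
and, for lifted $V,X,Y$, that $\overline R(V,X)Y$ equals $R^N(V,X)Y$ minus the term $(f')^2\big(h(X,Y)V-h(V,Y)X\big)$, plus possible $\partial_t$-components. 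Signs will be checked against the paper's convention $R(X,Y)=[\nabla_X,\nabla_Y]-\nabla_{[X,Y]}$.

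\textbf{Step 1: the tension field.} Since $\partial_t$ is a geodesic field in both products, it contributes nothing to $\tau(\varphi)$. In the horizontal directions the $\partial_t$-components $-ff'g(e_i,e_i)\partial_t$ and $-ff'h(e_i,e_i)\partial_t$ cancel, because $g=h|_{TM}$. What remains is
\[
\tau(\varphi)=\frac1{f^2}\sum_i\big(\nabla^N_{e_i}e_i-\nabla^M_{e_i}e_i\big)=\frac{m}{f^2}H ,
\]
where $H$ is the lift of the mean curvature vector of $M\subset N$.

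\textbf{Step 2: the $\partial_t$-contribution to $\tau_2(\varphi)$.} I differentiate $\tau=mf^{-2}H$ twice along $\partial_t$ with $\overline\nabla_{\partial_t}H=\frac{f'}{f}H$. The first derivative is $-\frac{mf'}{f^3}H$, and the second is $m\big(-\frac{f''}{f^3}+\frac{2(f')^2}{f^4}\big)H$. The curvature term $-\overline R(\tau,\partial_t)\partial_t=\frac{mf''}{f^3}H$ is added to minus this second derivative. Pairing with $\tau$, using $\overline h(H,H)=f^2|H|^2$, gives a contribution in $ff''$ and $(f')^2$ times $m^2|H|^2/f^4$.

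\textbf{Step 3: the horizontal contribution.} For each $\widetilde e_i$, I expand $-\overline R(\tau,\widetilde e_i)\widetilde e_i-\overline\nabla_{\widetilde e_i}\overline\nabla_{\widetilde e_i}\tau+\overline\nabla_{\widetilde\nabla_{\widetilde e_i}\widetilde e_i}\tau$ with the formulas above. The pieces with no $f'$ reassemble into $f^{-4}\tau_2(\mathbf i)$ (including the $N$-Laplacian of $H$), and these vanish by the biharmonicity hypothesis together with \eqref{eq1.5}. The remaining pieces are of three kinds.
\begin{itemize}
\item The warped curvature correction, summed over $i$, gives a multiple of $(m-1)(f')^2H$ after removing the term with $V=X$. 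This is where the factor $m-1$ comes from.
\item There are $\partial_t$-components such as $-ff'h(e_i,\nabla^N_{e_i}H)\partial_t$. These are orthogonal to $\tau$ and drop out of the pairing, though they matter for $\tau_2$ itself.
\item The term $\overline\nabla_{\widetilde\nabla_{\widetilde e_i}\widetilde e_i}\tau$ involves the $\partial_t$-component $-\frac{f'}{f}\partial_t$ of $\widetilde\nabla_{\widetilde e_i}\widetilde e_i$. It produces $-m\frac{f'}{f}\overline\nabla_{\partial_t}\tau$, which is again proportional to $(f')^2H$.
\end{itemize}

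\textbf{Step 4: the pairing.} I take $\overline h(\,\cdot\,,\tau)$, using that $H$ is normal, so $h(e_i,H)=0$. Adding the $(f')^2$ and $ff''$ coefficients from Steps 2 and 3 should give $\frac{2m^2[ff''+(m-1)(f')^2]}{f^4}|H|^2$.

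The main obstacle is the bookkeeping in Step 3: tracking the $f$-powers from $\widetilde e_i=e_i/f$, the signs in the warped curvature formula, and confirming that the surviving $(f')^2$ and $f''$ coefficients combine exactly with those of Step 2. I would first verify the final constant on a simple test case, a totally umbilical curve in $\mathbb R^2$ with $m=1$, where the $(m-1)$ term is absent and the $ff''$ coefficient can be checked in isolation.
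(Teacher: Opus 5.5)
Your plan follows the paper's proof step for step. Both use O'Neill's formulas, get $\tau(\varphi)=mf^{-2}H$, split $\tau_2(\varphi)$ into the $\partial_t$, horizontal-curvature and horizontal-Laplacian pieces, kill the $\tau_2(\mathbf{i})$ piece by hypothesis, and note that the $\partial_t$-components are orthogonal to $\tau$. However, the one coefficient you actually commit to in Step 3 is wrong. In $-\frac{1}{f^2}\sum_i\overline{R}(\tau,e_i)e_i$ the warping correction is $\frac{(f')^2}{f^2}\sum_i\big[h(e_i,e_i)\tau-h(\tau,e_i)e_i\big]=\frac{m(f')^2}{f^2}\tau=\frac{m^2(f')^2}{f^4}H$. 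This holds because $\tau$ is normal to every $e_i$, so there is no ``$V=X$'' term to discard. That removal happens when the first slot is a tangent vector, as in the Ricci formula \eqref{t-8}, not here; compare \eqref{t-4}.

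Hence the factor $m-1$ does not come from any single term. Write the $H$-part of $\tau_2(\varphi)$ as $\frac{m}{f^4}\,c\,H$. The coefficient $c$ receives:
\begin{itemize}
\item $ff''$ from $-\overline{R}(\tau,\partial_t)\partial_t$;
\item $ff''-2(f')^2$ from $-\overline{\nabla}_{\partial_t}\overline{\nabla}_{\partial_t}\tau$;
\item $m(f')^2$ from the curvature correction;
\item $m(f')^2$ from the $-\frac{f'}{f}\partial_t$ part of $\widetilde{\nabla}_{\widetilde e_i}\widetilde e_i$.
\end{itemize}
So $c=2ff''+2(m-1)(f')^2$, and pairing with $\tau$ via $\overline{h}(H,H)=f^2|H|^2$ gives the theorem.

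With your count one gets $c=2ff''+(2m-3)(f')^2$, which contradicts the statement. Step 4 only asserts that the coefficients ``should'' combine, so the proposal never establishes the constant, which is the entire content of the theorem. Your $m=1$ test would in fact expose this: the correct $(f')^2$-contributions $m+m-2$ cancel, while yours leave $-(f')^2$. Note that a circle is not biharmonic, so in that test you should compare $H$-coefficients rather than the pairing, or keep the term $\frac{m}{f^4}h(\tau_2(\mathbf{i}),H)$.

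Incidentally, your factor $f^{-4}$ in front of $\tau_2(\mathbf{i})$ is correct. The extra $m$ in the paper's \eqref{t-7} is a harmless slip, since that term vanishes by hypothesis.
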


\begin{proof}
We compute the tension field of the map $\varphi$. Let $\{e_i\}_{i=1}^m$ be a local orthonormal frame field on $(M,g)$. Hence, $\{\partial_t,\frac{1}{f}e_i\}_{i=1}^m$ is an orthonormal frame field on $(I\times M,\widetilde{g})$. We have
\begin{eqnarray}\label{t-1}
 \tau(\varphi)
   &=&\nonumber (\nabla d\varphi)(\partial_t,\partial_t)+\frac{1}{f^2}\sum_{i=1}^m(\nabla d\varphi)(e_i,e_i) \\
   &=&\nonumber  \nabla^\varphi_{\partial_t}d\varphi(\partial_t)-d\varphi(\widetilde{\nabla}_{\partial_t}\partial_t)
        +\frac{1}{f^2}\sum_{i=1}^m\Big[\nabla^\varphi_{e_i}d\varphi(e_i)-d\varphi(\widetilde{\nabla}_{e_i}e_i)\Big]\\
   &=& \overline{\nabla}_{\partial_t}\partial_t-\widetilde{\nabla}_{\partial_t}\partial_t
   +\frac{1}{f^2}\sum_{i=1}^m\Big[\overline{\nabla}_{e_i}e_i-\widetilde{\nabla}_{e_i}e_i\Big].
\end{eqnarray}
Here, $\widetilde{\nabla}$ (resp. $\overline{\nabla}$) denote the Levi-Civita connection of $(I\times M,\widetilde{g})$
(resp. of $(I\times N,\overline{h})$). By using the formulas of the Levi-Civita connection of warped product manifold
\begin{eqnarray*}
   \overline{\nabla}_{\partial_t}\partial_t&=&0,  \\
   \widetilde{\nabla}_{\partial_t}\partial_t&=&0,  \\
   \overline{\nabla}_{U}V &=& \nabla_{X}^NY-h(U,V)ff'\partial_t,\\
   \widetilde{\nabla}_{X}Y &=& \nabla_{X}^MY-g(X,Y)ff'\partial_t,
\end{eqnarray*}
where $X,Y\in\Gamma(TM)$ and $U,V\in\Gamma(TN)$ (see \cite{ON}), the equation (\ref{t-1}) becomes
\begin{eqnarray}\label{t-2}
 \tau(\varphi)
   &=&\nonumber \frac{1}{f^2}\sum_{i=1}^m\Big[\nabla^N_{e_i}e_i-\nabla_{e_i}e_i\Big]\\
   &=&\nonumber\frac{1}{f^2}\sum_{i=1}^mB(e_i,e_i)\\
   &=&\frac{m}{f^2}H.
\end{eqnarray}
Now, we compute the bitension field of the smooth map $\varphi$. We have
\begin{eqnarray}\label{t-3}
 \tau_2(\varphi)
   &=&\nonumber -\overline{R}(\tau(\varphi),\partial_t)\partial_t-\frac{1}{f^2}\sum_{i=1}^m\overline{R}(\tau(\varphi),e_i)e_i\\
   & &\nonumber -\Big[\nabla^\varphi_{\partial_t}\nabla^\varphi_{\partial_t}\tau(\varphi)-\nabla^\varphi_{\widetilde{\nabla}_{\partial_t}\partial_t}\tau(\varphi)\Big]\\
   & & -\frac{1}{f^2}\sum_{i=1}^m\Big[\nabla^\varphi_{e_i}\nabla^\varphi_{e_i}\tau(\varphi)-\nabla^\varphi_{\widetilde{\nabla}_{e_i}e_i}\tau(\varphi)\Big],
\end{eqnarray}
where $\overline{R}$ is the curvature tensor of $(I\times N,\overline{h})$. By using the formulas of the curvature tensor of warped product manifold
\begin{eqnarray*}
  \overline{R}(U,\partial_t)\partial_t &=& -\frac{f''}{f} U,\\
  \overline{R}(V,W)U&=& R^N(V,W)U-(f')^2\big[ h(U,W)V-h(U,V)W \big],
\end{eqnarray*}
where $U,V,W\in\Gamma(TN)$ (see \cite{ON}), and equation (\ref{t-2}), we get the following
\begin{eqnarray}\label{t-4}
 -\overline{R}(\tau(\varphi),\partial_t)\partial_t-\frac{1}{f^2}\sum_{i=1}^m\overline{R}(\tau(\varphi),e_i)e_i
   &=&\nonumber\frac{m[ff''+m(f')^2]}{f^4}H\\
   & &-\frac{m}{f^4}\sum_{i=1}^mR^N(H,e_i)e_i.
\end{eqnarray}
According to simple calculations, we obtain
\begin{eqnarray}\label{t-5}
-\Big[\nabla^\varphi_{\partial_t}\nabla^\varphi_{\partial_t}\tau(\varphi)-\nabla^\varphi_{\widetilde{\nabla}_{\partial_t}\partial_t}\tau(\varphi)\Big]
   &=& \frac{m[ff''-2(f')^2]}{f^4}H.
\end{eqnarray}
and the following
\begin{eqnarray}\label{t-6}
   -\frac{1}{f^2}\sum_{i=1}^m\Big[\nabla^\varphi_{e_i}\nabla^\varphi_{e_i}\tau(\varphi)-\nabla^\varphi_{\widetilde{\nabla}_{e_i}e_i}\tau(\varphi)\Big]
   &=&\nonumber-\frac{m}{f^4}\sum_{i=1}^m\Big[\nabla^N_{e_i}\nabla^N_{e_i}H-\nabla^N_{\nabla_{e_i}e_i}H\Big]\\
   &&+\frac{m^2(f')^2}{f^4}H-\frac{m^2f'}{f^3}|H|^2\partial_t,
\end{eqnarray}
where $|H|^2=h(H,H)$. Substituting (\ref{t-4})-(\ref{t-6}) in (\ref{t-3}), we get
\begin{eqnarray}\label{t-7}
 \tau_2(\varphi)
   &=& \frac{2m[ff''+(m-1)(f')^2]}{f^4}H+\frac{m}{f^4}\tau_2(\mathbf{i})
   -\frac{m^2f'}{f^3}|H|^2\partial_t.
\end{eqnarray}
The Theorem \ref{th1} follows from (\ref{t-7}) and the biharmonicity condition of
the submanifold $(M,g)$ in $(N,h)$ with $\overline{h}(\partial_t,\tau(\varphi))=0$.
\end{proof}

The following corollary reveals a rigidity result. Under certain conditions, the biharmonicity of an inclusion map into a product manifold implies harmonicity, thus narrowing the class of possible biharmonic maps in this context.

\begin{corollary}\label{co1}
Let $(M,g)$ be a biharmonic submanifold in $(N,h)$. Then, the inclusion map
$
\varphi: (I \times M, \widetilde{g}) \longrightarrow (I \times N, \overline{h})
$
is biharmonic if and only if it is harmonic.
\end{corollary}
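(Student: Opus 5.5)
The plan is to read off both directions from the explicit formula (\ref{t-7}) obtained in the proof of Theorem~\ref{th1}. Since every harmonic map is biharmonic, the converse direction is immediate. For the direct direction I assume $\tau_2(\varphi)=0$. Because $(M,g)$ is biharmonic in $(N,h)$, we have $\tau_2(\mathbf{i})=0$, so (\ref{t-7}) reduces to
\[
\tau_2(\varphi)=\frac{2m[ff''+(m-1)(f')^2]}{f^4}H-\frac{m^2f'}{f^3}|H|^2\partial_t .
\]

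The first term is tangent to the $N$-factor and the second is a multiple of $\partial_t$. These directions are $\overline{h}$-orthogonal, so $\tau_2(\varphi)=0$ splits into two equations:
\[
[ff''+(m-1)(f')^2]\,H=0,\qquad f'(t)\,|H|^2(x)=0 \quad\text{for all } (t,x)\in I\times M .
\]
Pairing with $\tau(\varphi)=\frac{m}{f^2}H$, as in Theorem~\ref{th1}, only recovers the first equation. The second equation, coming from the $\partial_t$-component, is the one I would use.

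The key observation is that $H$ is the mean curvature of $M$ in $N$, so it depends only on $x\in M$, while $f'$ depends only on $t\in I$. The second equation then separates: either $f'\equiv0$ on $I$, or $f'(t_0)\neq0$ for some $t_0\in I$. In the second case $|H|^2(x)=0$ for every $x\in M$, so $H\equiv0$, and (\ref{t-2}) gives $\tau(\varphi)=0$, i.e.\ $\varphi$ is harmonic.

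The main obstacle is the remaining case $f'\equiv0$, and I expect it to be a genuine exception rather than a technicality. If $f$ is constant, both coefficients in the displayed formula vanish and $\tau_2(\varphi)=\frac{m}{f^4}\tau_2(\mathbf{i})=0$ automatically. In that case $\varphi$ is, up to homothety, the product of the identity of $I$ with the inclusion $M\hookrightarrow N$. It is then biharmonic but not harmonic whenever $M$ is a proper biharmonic submanifold. So I would prove the corollary under the implicit assumption that the warping function $f$ is non-constant (i.e.\ $f'\not\equiv0$ on $I$), and I would state that assumption explicitly, since it is exactly what the separation-of-variables step needs. Under that assumption the argument above is complete.
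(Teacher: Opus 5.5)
Your proof is correct and matches the argument the paper leaves implicit: it gives no separate proof, and the corollary is read off from the $\partial_t$-component $-\frac{m^2f'}{f^3}|H|^2\partial_t$ of (\ref{t-7}), which forces $H\equiv 0$ once $f'\not\equiv 0$ (the pairing in Theorem~\ref{th1} alone would not suffice). Your caveat is also right and exposes a genuine flaw the paper overlooks: for constant $f$ and a proper biharmonic $M$ (e.g.\ $\mathbb{S}^m(\tfrac{1}{\sqrt{2}})\subset\mathbb{S}^{m+1}$ with $f\equiv 1$), $\varphi$ is biharmonic with $\tau(\varphi)=\frac{m}{f^2}H\neq 0$, as the paper's own later pointwise corollary ($\tau_2(\varphi)=0$ on $\{t_0\}\times M$ iff $f'(t_0)=f''(t_0)=0$) confirms, so the equivalence holds exactly when $f$ is non-constant or $M$ is minimal.
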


A map is said to be biconservative if the divergence of its bienergy stress-energy tensor vanishes \cite{LMO}. For immersions, this condition is equivalent to the vanishing of the tangential component of the bitension field (also called tangentially biharmonic \cite{S}). According to equation (\ref{t-7}), we get the following results.

\begin{corollary}
Let $(M,g)$ be a biharmonic submanifold of a Riemannian manifold $(N,h)$. Then,
the tangential component of the bitension field $\tau_2(\varphi)$ vanishes at a point $(t_0,x_0)\in I\times M$ if and only if $f'(t_0) = 0$.
\end{corollary}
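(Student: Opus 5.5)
The plan is to read the statement off the explicit formula \eqref{t-7} obtained in the proof of Theorem~\ref{th1}, by splitting $\tau_2(\varphi)$ into its parts tangent and normal to $\varphi(I\times M)$ in $(I\times N,\overline{h})$.

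First, since $(M,g)$ is biharmonic in $(N,h)$, we have $\tau_2(\mathbf{i})=0$. Formula \eqref{t-7} then reduces to
\[
\tau_2(\varphi)=\frac{2m[ff''+(m-1)(f')^2]}{f^4}H-\frac{m^2f'}{f^3}|H|^2\partial_t .
\]

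Second, I identify the tangent and normal bundles of $I\times M$ inside $I\times N$ at a point $(t_0,x_0)$. The tangent space is spanned by $\partial_t$ and $T_{x_0}M$. The mean curvature vector $H$ is $h$-orthogonal to $T_{x_0}M$, hence $f^2h$-orthogonal to it. It is also $\overline{h}$-orthogonal to $\partial_t$, because $\overline{h}=dt^2+f^2h$ has no cross terms. Therefore $H$ is normal to $I\times M$, while $\partial_t$ is tangent. This gives the decomposition
\[
\tau_2(\varphi)^\top=-\frac{m^2f'(t_0)}{f(t_0)^3}|H|^2(x_0)\,\partial_t,\qquad
\tau_2(\varphi)^\perp=\frac{2m[ff''+(m-1)(f')^2](t_0)}{f(t_0)^4}H(x_0).
\]

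Third, since $f>0$ and $m\ge1$, the tangential part vanishes at $(t_0,x_0)$ if and only if $f'(t_0)\,|H|^2(x_0)=0$. This yields $f'(t_0)=0$ exactly when $H(x_0)\neq0$.

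The only delicate point is this last one. At points where $H(x_0)=0$ the tangential component vanishes for every $f$, so the equivalence as stated implicitly requires $H(x_0)\neq 0$. I would therefore state explicitly that the corollary is understood at points where $M$ is not minimal, for instance when $M$ is proper biharmonic with nowhere vanishing $H$. Under that hypothesis the argument above gives the claim directly. The verification of \eqref{t-7} itself is taken from the proof of Theorem~\ref{th1}.
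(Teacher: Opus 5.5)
Your argument is the paper's own: it reads the tangential part $-\frac{m^2 f'}{f^3}|H|^2\,\partial_t$ of $\tau_2(\varphi)$ off \eqref{t-7} once $\tau_2(\mathbf{i})=0$, using that $H$ is normal and $\partial_t$ tangent to $I\times M$. Your caveat is correct and points to a genuine omission in the corollary as stated: the ``only if'' direction needs $H(x_0)\neq 0$. For instance, a minimal $M$ is biharmonic and gives $\tau(\varphi)=0$, hence $\tau_2(\varphi)\equiv 0$ for every warping function $f$.
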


For normally biharmonic hypersurface, we obtain the following.

\begin{corollary}
Let $(M,g)$ be a hypersurface of $(N,h)$.
Then, the hypersurface $(I \times M,\widetilde{g})$ is normally biharmonic if and only if the warping function $f$ is of the form
$
f(t) = (a t + b)^{1/m},
$
for some constants $a,b \in \mathbb{R}$, and we have $$\Delta(\lambda) = \lambda |A|^2 -\lambda \operatorname{Ric}^N(\eta , \eta ),$$
where $\lambda$ denote the mean curvature function of $(M,g)$ in $(N,h)$.
\end{corollary}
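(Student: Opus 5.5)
The plan is to reuse formula (\ref{t-7}) from the proof of Theorem~\ref{th1}. Its derivation, steps (\ref{t-1})--(\ref{t-6}), never uses the biharmonicity of $(M,g)$, so for any submanifold
\[
\tau_2(\varphi)=\frac{2m[ff''+(m-1)(f')^2]}{f^4}H+\frac{m}{f^4}\tau_2(\mathbf{i})-\frac{m^2f'}{f^3}|H|^2\partial_t .
\]
Now take $(M,g)$ to be a hypersurface with unit normal $\eta$ and $H=\lambda\eta$. The first step is to identify the normal bundle of $I\times M$ in $(I\times N,\overline{h})$.
\begin{itemize}
\item The vector $\partial_t$ and all of $TM$ are tangent to $I\times M$.
\item The normal line is spanned by $\eta$, whose unit version for $\overline{h}$ is $\eta/f$.
\end{itemize}
Hence the $\partial_t$-term in (\ref{t-7}) and the tangential part $\tau_2(\mathbf{i})^\top$ (which lies in $TM$) drop out of the normal component. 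The normal part of $\tau_2(\mathbf{i})$ is given by Ou's formula (\ref{S}), with $\operatorname{Ric}^N$ in place of $\overline{\operatorname{Ric}}$. This yields
\[
\tau_2(\varphi)^\perp=\frac{m}{f^4}\Big\{2[ff''+(m-1)(f')^2]\lambda+m\big[-\Delta\lambda+\lambda|A|^2-\lambda\operatorname{Ric}^N(\eta,\eta)\big]\Big\}\eta .
\]

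The second step is to recognize the $t$-dependent coefficient as a derivative. A direct computation gives
\[
(f^m)''=mf^{m-2}\big[ff''+(m-1)(f')^2\big],
\]
so $ff''+(m-1)(f')^2\equiv0$ is equivalent to $(f^m)''=0$. Since $f>0$, this holds exactly when $f=(at+b)^{1/m}$ for constants $a,b$.

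The ``if'' direction is then immediate. If $f=(at+b)^{1/m}$ and the stated equation on $\lambda$ holds, both brackets in $\tau_2(\varphi)^\perp$ vanish. For the ``only if'' direction, I would separate variables. The expression in braces has the form $2\alpha(t)\lambda(x)+m\,Q(x)$, where $\alpha(t)=ff''+(m-1)(f')^2$ and $Q(x)=-\Delta\lambda+\lambda|A|^2-\lambda\operatorname{Ric}^N(\eta,\eta)$, and it must vanish identically on $I\times M$. Fix a point $x_0$ with $\lambda(x_0)\neq0$. Then $\alpha(t)=-mQ(x_0)/(2\lambda(x_0))$ is a constant $c$, and $Q=-\frac{2c}{m}\lambda$ on $M$.

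The main obstacle is to force $c=0$. This does not follow from the separation argument alone. For instance, $ff''=c\neq0$ has positive solutions when $m=1$, and if $\lambda\equiv0$ then $f$ is unconstrained. So the ``only if'' direction is false as literally stated, and it must be proved under an added hypothesis. I would state the corollary with one of the following.
\begin{itemize}
\item \emph{Variant (a):} the normal-biharmonicity equation is required to hold for a family of warping functions, or the $t$-part and $x$-part are required to vanish separately. This yields $c=0$ and hence both conclusions.
\item \emph{Variant (b):} one assumes beforehand that $(M,g)$ is normally biharmonic in $(N,h)$, i.e.\ $Q\equiv0$. If moreover $\lambda\not\equiv0$, then $c=0$ and therefore $f^m=at+b$.
\end{itemize}
The remaining case $\lambda\equiv0$ must be excluded explicitly, because there every $f$ works.
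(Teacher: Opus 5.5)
Your reduction is the one the paper relies on. The paper gives no argument beyond ``according to equation \eqref{t-7}'': it takes the $\eta$-component of \eqref{t-7}, inserts Ou's formula \eqref{S}, and then tacitly requires the $t$-dependent coefficient $2[ff''+(m-1)(f')^2]\lambda$ and the coefficient coming from $\tau_2(\mathbf{i})^\perp$ to vanish separately. You are right that this step is unjustified: both terms are multiples of the same normal $\eta$ and can cancel. Your separation argument gives the correct statement. $(I\times M,\widetilde g)$ is normally biharmonic if and only if either $\lambda\equiv0$ (with $f$ arbitrary), or $ff''+(m-1)(f')^2\equiv c$ is constant and $\lambda$ satisfies the shifted equation below. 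Your $\lambda\equiv0$ observation already refutes the ``only if'' direction. Your variant (b) is the natural repair; it is the hypersurface analogue of Corollary~\ref{co2}. Your $m=1$ remark, however, is not by itself a counterexample, because you also need a hypersurface whose $Q$ is the matching multiple of $\lambda$. Such examples with $\lambda\neq0$ do exist in every dimension, as shown next.

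\textbf{Factor correction.} The coefficient of $\tau_2(\mathbf{i})$ in \eqref{t-7} should be $1/f^4$, not $m/f^4$. The terms $-\frac{m}{f^4}\sum_i\{R^N(H,e_i)e_i+\nabla^N_{e_i}\nabla^N_{e_i}H-\nabla^N_{\nabla_{e_i}e_i}H\}$ collected from \eqref{t-4} and \eqref{t-6} equal $\frac{1}{f^4}\tau_2(\mathbf{i})$ by \eqref{eq1.5}. As a check, $f\equiv1$ must give $\tau_2(\varphi)=\tau_2(\mathbf{i})$. Hence your brace should read $2\alpha(t)\lambda+Q$, and the second alternative is $\Delta\lambda=\lambda\big(|A|^2-\operatorname{Ric}^N(\eta,\eta)+2c\big)$. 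This does not affect your logic.

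\textbf{Explicit counterexample with $\lambda\neq0$.} Take $M=\mathbb{S}^2\subset\mathbb{R}^3$ with the inner normal. Then $\lambda=1$, $|A|^2=2$ and $\Delta\lambda=0$, so $Q=2$ and $c=-1$ is required. The function $f(t)=\sqrt{1-t^2}$ on $I=(-1,1)$ satisfies $ff''+(f')^2=\tfrac12(f^2)''=-1$, so $\tau_2(\varphi)^\perp=0$. Yet $f^2$ is not affine, and $\Delta\lambda=0\neq2=\lambda|A|^2$. With the paper's factor $m/f^4$ one would take $f^2=2(1-t^2)$ instead, so the counterexample does not depend on the correction. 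More generally, $\mathbb{S}^m(r)\subset\mathbb{R}^{m+1}$ works with $c=-m/(2r^2)$ and any local positive solution of $ff''+(m-1)(f')^2=c$.
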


This corollary provides a precise characterization of the warping functions for which a natural geometric compatibility condition holds between the tension and bitension fields of the inclusion map.

\begin{corollary}\label{co2}
Let $(M,g)$ be a biharmonic submanifold of $(N,h)$. Consider the inclusion map
$
\varphi:(I \times M,\widetilde{g}) \longrightarrow (I \times N,\overline{h}).
$
Then, the tension field and the bitension field of $\varphi$ are orthogonal, that is the submanifold $(I \times M,\widetilde{g})$ is normally biharmonic if and only if the warping function $f$ is of the form
$
f(t) = (a t + b)^{1/m},
$
for some constants $a,b \in \mathbb{R}$.
\end{corollary}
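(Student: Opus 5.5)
The plan is to read everything off formula (\ref{t-7}) from the proof of Theorem~\ref{th1}. Since $(M,g)$ is biharmonic in $(N,h)$, we have $\tau_2(\mathbf{i})=0$, and (\ref{t-7}) reduces to
\[
\tau_2(\varphi)=\frac{2m[ff''+(m-1)(f')^2]}{f^4}H-\frac{m^2f'}{f^3}|H|^2\partial_t .
\]
Next I will split this into normal and tangential parts with respect to the submanifold $I\times M\subset I\times N$. The vector $\partial_t$ is tangent to $I\times M$. The vector $H$ is normal to $M$ in $N$, and, because $\overline{h}=dt^2+f^2h$, it is also $\overline{h}$-orthogonal both to $\partial_t$ and to $TM$. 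Hence $H$ is normal to $I\times M$. By (\ref{t-2}), $\tau(\varphi)=\frac{m}{f^2}H$ is normal, so the normal component of $\tau_2(\varphi)$ is exactly the $H$-term above, and it is parallel to $\tau(\varphi)$.

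Two consequences follow. Orthogonality $\overline{h}(\tau_2(\varphi),\tau(\varphi))=0$ is, by Theorem~\ref{th1}, equivalent to $[ff''+(m-1)(f')^2]|H|^2=0$. Normal biharmonicity, meaning $\tau_2(\varphi)^\perp=0$, is equivalent to $[ff''+(m-1)(f')^2]H=0$. These two conditions coincide, which justifies the ``that is'' in the statement. Assuming $H\neq0$ on an open dense set (the non-harmonic case; if $H\equiv0$ both conditions hold trivially), both reduce to the ODE $ff''+(m-1)(f')^2=0$ on $I$.

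To solve the ODE, I will note that $(f^m)''=mf^{m-2}\big(ff''+(m-1)(f')^2\big)$. Since $f>0$, the ODE is equivalent to $(f^m)''=0$, that is, $f^m=at+b$, giving $f=(at+b)^{1/m}$ with $at+b>0$ on $I$. The converse is the same computation read backwards.

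The only delicate point is the handling of $H$. The equivalence with the ODE on $f$ requires $H$ to be nonzero at some point of $M$, since the factor $ff''+(m-1)(f')^2$ depends only on $t$. I will state this explicitly: if $H$ vanishes identically, the inclusion is harmonic and the condition on $f$ is vacuous. Otherwise, picking any $x_0$ with $H(x_0)\neq0$ forces the ODE for all $t\in I$.
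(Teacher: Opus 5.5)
Your proposal is correct and follows essentially the paper's route: the corollary is read off from (\ref{t-7}) and Theorem~\ref{th1}, and both orthogonality and normal biharmonicity reduce to $ff''+(m-1)(f')^2=0$, that is, to $(f^m)''=0$. Your explicit splitting of (\ref{t-7}) into the normal $H$-term and the tangential $\partial_t$-term justifies the ``that is'' that the paper leaves unargued. Your caveat $H\not\equiv 0$ is also genuinely needed: if $H\equiv 0$, every warping function works and the ``only if'' direction fails, and the paper's statement omits this hypothesis.
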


\begin{example}
For every $m \geq2$, the inclusion map
$$\varphi : (I \times \mathbb{S}^m(\tfrac{1}{\sqrt{2}}), \widetilde{g}) \longrightarrow (I \times \mathbb{S}^{m+1}, \overline{h}),$$
is normally biharmonic, where $f$ is of the form
$
f(t) = (a t + b)^{1/m},
$
for some constants $a,b \in \mathbb{R}$. This follows from the fact that the $\mathbb{S}^m(\tfrac{1}{\sqrt{2}}) \subset \mathbb{S}^{m+1}$ is a constant mean curvature function proper biharmonic hypersurface.
\end{example}


This corollary gives a pointwise characterization for the vanishing of the bitension field of the inclusion map in the warped product setting.
\begin{corollary}
Let $(M,g)$ be a proper biharmonic submanifold of a Riemannian manifold $(N,h)$. Then, $\tau_2(\varphi)=0$
on $\{t_0\}\times M$ if and only if $f'(t_0) = 0$ and $f''(t_0) = 0$.
\end{corollary}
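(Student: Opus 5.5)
The proof rests entirely on formula (\ref{t-7}) from the proof of Theorem~\ref{th1}. Since $(M,g)$ is biharmonic in $(N,h)$, we have $\tau_2(\mathbf{i})=0$, so (\ref{t-7}) reduces to
\[
\tau_2(\varphi)=\frac{2m[ff''+(m-1)(f')^2]}{f^4}H-\frac{m^2f'}{f^3}|H|^2\partial_t .
\]
We evaluate this along the slice $\{t_0\}\times M$. The coefficients depend only on $t$, while $H$ and $|H|^2$ depend only on $x$. The vector fields $H$ (normal to $M$ in $N$, so tangent to the $N$-factor) and $\partial_t$ are $\overline{h}$-orthogonal. Hence $\tau_2(\varphi)=0$ at a point $(t_0,x)$ if and only if both
\[
[ff''+(m-1)(f')^2](t_0)\,H(x)=0 \quad\text{and}\quad f'(t_0)\,|H|^2(x)=0
\]
hold. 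The factors $1/f^4$ and $1/f^3$ are harmless because $f>0$.

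Next we use properness. A proper biharmonic submanifold is not harmonic, so $H\neq 0$ somewhere, and by continuity on an open set $U\subset M$. I read ``$\tau_2(\varphi)=0$ on $\{t_0\}\times M$'' as vanishing at every point of the slice, so in particular at points of $U$.

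For the forward direction, take $x\in U$. The second condition gives $f'(t_0)=0$. Substituting into the first gives $f(t_0)f''(t_0)=0$, and since $f(t_0)>0$ we get $f''(t_0)=0$.

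For the converse, $f'(t_0)=f''(t_0)=0$ makes both coefficients vanish at $t_0$, so $\tau_2(\varphi)=0$ on the whole slice, with no use of properness.

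There is no real obstacle beyond the bookkeeping. The only points needing care are the following:
\begin{itemize}
\item Quoting (\ref{t-7}) correctly with $\tau_2(\mathbf{i})=0$.
\item Justifying the splitting into components by orthogonality of $H$ and $\partial_t$.
\item Noting that properness is what guarantees a point with $H\neq0$; otherwise both conditions would hold vacuously.
\end{itemize}
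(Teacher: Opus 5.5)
Your proposal is correct and matches the paper, which states this corollary as a direct consequence of (\ref{t-7}). The paper's argument is the one you give: set $\tau_2(\mathbf{i})=0$, split $\tau_2(\varphi)$ into its $\overline{h}$-orthogonal $H$- and $\partial_t$-components, and evaluate at a point where $H\neq 0$ (which properness provides) to get first $f'(t_0)=0$ and then $f''(t_0)=0$; incidentally, the coefficient of $\tau_2(\mathbf{i})$ in (\ref{t-7}) should be $1/f^4$ rather than $m/f^4$, which is harmless here since that term drops out.
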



We denote by $\operatorname{Ric}$ (resp. $\widetilde{\operatorname{Ric}}$) the Ricci curvature of $(M,g)$ (resp. of $(I \times M, \widetilde{g})$). The following result provides an explicit relation between the bitension and tension fields of the inclusion map $\varphi$ in terms of the Ricci curvatures of $(M,g)$ and the warped product manifold.
\begin{proposition}\label{pro1}
 Let $(M,g)$ be a biharmonic submanifold in $(N,h)$. Then
\begin{eqnarray*}
 \overline{h}(\tau_2(\varphi),\tau(\varphi))
   &=& \frac{2m^2}{f^4}\left[\operatorname{Ric}(X,X)-\widetilde{\operatorname{Ric}}(X,X)\right]|H|^2.
\end{eqnarray*}
\end{proposition}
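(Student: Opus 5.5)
The plan is to combine Theorem~\ref{th1} with the standard Ricci curvature formula for a warped product. The identity only makes sense if $X$ is a vector field tangent to $M$, lifted horizontally to $I\times M$, with unit length for the metric $g$, so that $g(X,X)=1$ (equivalently $\widetilde{g}(X,X)=f^2$). I will fix this reading of the statement. Theorem~\ref{th1} already gives
\[
\overline{h}(\tau_2(\varphi),\tau(\varphi))=\frac{2m^2[ff''+(m-1)(f')^2]}{f^4}|H|^2 .
\]
It therefore suffices to prove the pointwise identity
\[
\operatorname{Ric}(X,X)-\widetilde{\operatorname{Ric}}(X,X)=ff''+(m-1)(f')^2 .
\]

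For this I would use O'Neill's formula for the Ricci tensor of a warped product $B\times_f F$ (see \cite{ON}). For vectors $V,W$ tangent to the fibre $F$, of dimension $d$, it reads
\[
\widetilde{\operatorname{Ric}}(V,W)=\operatorname{Ric}^F(V,W)-\widetilde{g}(V,W)\,f^{\#},\qquad f^{\#}=\frac{\Delta_B f}{f}+(d-1)\frac{|\operatorname{grad} f|^2}{f^2}.
\]
Here the Laplacian is taken with the sign convention $\Delta_B f=\operatorname{trace}\operatorname{Hess} f$. In our case $B=I$ with metric $dt^2$, $F=M$ and $d=m$, so $\Delta_B f=f''$ and $|\operatorname{grad} f|^2=(f')^2$. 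Taking $V=W=X$ with $\widetilde{g}(X,X)=f^2$ gives
\[
\widetilde{\operatorname{Ric}}(X,X)=\operatorname{Ric}(X,X)-f^2\Big(\frac{f''}{f}+(m-1)\frac{(f')^2}{f^2}\Big)=\operatorname{Ric}(X,X)-\big[ff''+(m-1)(f')^2\big].
\]
This is exactly the needed identity.

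As a check, and as a self-contained alternative, I could derive the same formula from the curvature expressions used in the proof of Theorem~\ref{th1}, applied to $\widetilde{R}$ on $I\times M$. The mixed term is $\widetilde{R}(X,\partial_t)\partial_t=-\frac{f''}{f}X$, which contributes $f^2\cdot\frac{f''}{f}\,g(X,X)=ff''$ to $\widetilde{g}$-traces. The fibre term is
\[
\widetilde{R}(e_i,X)X=R^M(e_i,X)X-(f')^2\big[g(X,X)e_i-g(X,e_i)X\big].
\]
I would trace this over the $\widetilde{g}$-orthonormal frame $\{\partial_t,\tfrac1f e_i\}$. The factor $1/f^2$ from the frame cancels against $\widetilde{g}=f^2g$ in the pairing, and the bracket then yields $-(m-1)(f')^2$ after tracing.

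Substituting into Theorem~\ref{th1} finishes the proof. The main obstacle is purely one of conventions: the sign of the Laplacian and of $R$, and the normalization of $X$. If $X$ were normalized instead with $\widetilde{g}(X,X)=1$, a spurious factor $1/f^2$ would appear, so the direct trace computation above is how I would settle the constant.
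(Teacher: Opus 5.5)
Your proposal is correct and follows the paper's own argument. The paper likewise takes $X$ to be a $g$-unit vector field on $M$, quotes O'Neill's warped-product identity $\widetilde{\operatorname{Ric}}(X,X)=\operatorname{Ric}(X,X)-[ff''+(m-1)(f')^2]$, and substitutes it into Theorem~\ref{th1}. Your optional direct trace check is a harmless addition, provided you note that the $\partial_t$ term contributes $-ff''$ to $\widetilde{\operatorname{Ric}}(X,X)$, i.e.\ $+ff''$ to the difference $\operatorname{Ric}(X,X)-\widetilde{\operatorname{Ric}}(X,X)$.
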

\begin{proof}
Note that, for each unit vector field  $X$ on $(M,g)$, we have
\begin{equation}\label{t-8}
\widetilde{\operatorname{Ric}}(X,X)=\operatorname{Ric}(X,X)-[ff''+(m-1)(f')^2],
\end{equation}
(see \cite{ON}). The proof of Proposition \ref{pro1} follows from  Theorem  \ref{th1} and (\ref{t-8}).
\end{proof}

According to Corollary \ref{co2} and Proposition \ref{pro1}, we deduce the following.

\begin{corollary}
Let $(M,g)$ be a biharmonic submanifold of $(N,h)$. Consider the inclusion map
$
\varphi:(I \times M,\widetilde{g}) \longrightarrow (I \times N,\overline{h}).
$
Then, the submanifold $(I \times M,\widetilde{g})$ is normally biharmonic if and only if
$\widetilde{\operatorname{Ric}}|_{TM}=\operatorname{Ric}$.
\end{corollary}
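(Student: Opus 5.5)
The plan is to combine Corollary~\ref{co2} with Proposition~\ref{pro1}, or more directly with identity (\ref{t-8}). By Corollary~\ref{co2}, $(I\times M,\widetilde{g})$ is normally biharmonic exactly when $\overline{h}(\tau_2(\varphi),\tau(\varphi))=0$. I would first make sure this is the right notion. By (\ref{t-7}), with $\tau_2(\mathbf{i})=0$, the normal part of $\tau_2(\varphi)$ is $\frac{2m[ff''+(m-1)(f')^2]}{f^4}H$, and this is parallel to $\tau(\varphi)=\frac{m}{f^2}H$. So if $H\neq 0$, normal biharmonicity is equivalent to
\[
ff''+(m-1)(f')^2=0 .
\]
This equation is equivalent to $(f^m)''=0$, that is, $f=(at+b)^{1/m}$.

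Next I would invoke (\ref{t-8}): for every unit $X\in\Gamma(TM)$,
\[
\operatorname{Ric}(X,X)-\widetilde{\operatorname{Ric}}(X,X)=ff''+(m-1)(f')^2 .
\]
So the ODE holds at $t$ if and only if $\widetilde{\operatorname{Ric}}(X,X)=\operatorname{Ric}(X,X)$ for all unit $X$ tangent to $M$. Here $\operatorname{Ric}$ is pulled back to the slice, and $X$ is taken $g$-unit, consistent with the normalization in (\ref{t-8}). Polarization then extends this from unit vectors to all of $TM\times TM$, since both sides are symmetric bilinear forms. This gives $\widetilde{\operatorname{Ric}}|_{TM}=\operatorname{Ric}$. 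Conversely, restricting this equality to unit $X$ recovers the ODE through (\ref{t-8}), and hence normal biharmonicity.

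The main obstacle is the normalization in (\ref{t-8}). If $X$ is $g$-unit, it has $\widetilde g$-length $f$, and O'Neill's formula for horizontal vectors of a warped product reads
\[
\widetilde{\operatorname{Ric}}(X,X)=\operatorname{Ric}(X,X)-\Big[\tfrac{f''}{f}+(m-1)\tfrac{(f')^2}{f^2}\Big]\widetilde g(X,X).
\]
With $\widetilde g(X,X)=f^2$ this reproduces (\ref{t-8}), so the formulation is consistent, but I would check this explicitly. The other delicate point is the degenerate case $H=0$. There both sides of Proposition~\ref{pro1} vanish and $\varphi$ is harmonic, so the equivalence in Corollary~\ref{co2} tacitly assumes $H\neq0$, that is, $M$ proper biharmonic. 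I would state this assumption, inherited from Corollary~\ref{co2}, and argue on the open set where $H\neq0$.
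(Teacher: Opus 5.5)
Your proposal is correct and is essentially the paper's argument. The paper derives this corollary in one line by combining Corollary~\ref{co2} (normal biharmonicity $\Leftrightarrow$ $ff''+(m-1)(f')^2=0$) with the warped-product identity (\ref{t-8}) behind Proposition~\ref{pro1}; your check of the normalization in (\ref{t-8}) and the polarization step only make that explicit. Your caveat is also justified---for minimal $M$ one has $\tau(\varphi)=0$ for every $f$, while e.g.\ $f=e^t$ gives $\operatorname{Ric}(X,X)-\widetilde{\operatorname{Ric}}(X,X)=me^{2t}\neq 0$, so $H\not\equiv 0$ is genuinely needed. Moreover, since the coefficient $ff''+(m-1)(f')^2$ depends only on $t$, a single point with $H\neq 0$ already forces the ODE on all of $I$, so no restriction to an open set is required.
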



\end{document}